\title[Identifiability for a class of symmetric tensors]
{Identifiability for a class of symmetric tensors}
\date{}
\newcommand{\C}{\mathbb{C}}
\newcommand{\Z}{\mathbb{Z}}
\newcommand{\Pj}{\mathbb{P}}
\newcommand{\N}{\mathbb{N}}
\newtheorem{theorem}{Theorem}[section]
\newtheorem{proposition}[theorem]{Proposition}
\newtheorem{lemma}[theorem]{Lemma}
\theoremstyle{definition}
\newtheorem{definition}[theorem]{Definition}
\newtheorem{example}[theorem]{Example}
\newtheorem{remark}[theorem]{Remark}
\newcommand{\rank}{\operatorname{rank}}
\subjclass[2010]{14J70, 14C20, 14N05, 15A69, 15A72}
\author[E.~Angelini]{Elena Angelini}
\address{Elena Angelini. Dipartimento di Ingegneria dell'Informazione e Scienze Matematiche, Universit\`a di Siena, Italy}
\email{elena.angelini@unisi.it}
\author[L.~Chiantini]{Luca Chiantini}
\address{Luca Chiantini. Dipartimento di Ingegneria dell'Informazione e Scienze Matematiche, Universit\`a di Siena, Italy}
\email{luca.chiantini@unisi.it}
\author[A.~Mazzon]{Andrea Mazzon}
\address{Andrea Mazzon. Dipartimento di Ingegneria dell'Informazione e Scienze Matematiche, Universit\`a di Siena, Italy}
\email{nnmazz@hotmail.it}
\begin{document}







\begin{abstract}
We use methods of algebraic geometry to find new, effective methods for detecting the identifiability of symmetric tensors. In particular,
for ternary symmetric tensors $T$ of degree $7$, we use the analysis of the Hilbert function
of a finite projective set, and the Cayley-Bacharach property, to prove that, when the Kruskal's ranks of a decomposition
of $T$ are maximal (a condition which holds outside a
Zariski closed set of measure 0), then the tensor $T$ is identifiable, i.e. the decomposition is unique, 
even if the rank lies beyond the range of application
of both the Kruskal's and the reshaped Kruskal's criteria.
\end{abstract}

\keywords{Symmetric tensors, Waring identifiability, Kruskal's criterion, Hilbert function, Cayley-Bacharach property}

\maketitle

\section{Introduction}
In the study of symmetric tensors $T$ of type $(n+1)\times\dots\times (n+1)$ ($d$ times), which can be identified with
polynomial forms of degree $d$ in $n+1$ variables, one of the main aspects concerns their Waring decompositions, i.e.
decompositions of type $T=L_1^d+\dots+L_r^d$, where each $L_i$ is a linear form. The (Waring) rank of $T$, which is the minimum
$r$ for which the decomposition exists, turns out to be a good measure for the complexity of $T$. Many recent results
are devoted to the computation of the (Waring) rank of forms. Effective methods for the computation of decompositions
are available, though their practical implementation is often out of reach (see e.g. \cite{HauensteinOedOttSomm},
\cite{Ange}). 

An important, open question, however, is related to the uniqueness of a decomposition with $r$ minimal, i.e.
the uniqueness of a decomposition which computes the rank. The existence of a unique decomposition that computes the
rank of $T$ is a property of $T$ denoted as {\it identifiability}. Uniqueness is important for the applications, at least for two reasons. 
First, the computation of an effective decomposition is often faster, when the target is uniquely determined. Second,
if we are looking for a specific decomposition, the uniqueness ensures us that if we get close to one of them, then that one
is exactly what we were looking for. Here, with {\it uniqueness} of the decomposition we mean uniqueness up
to trivialities, like permutation of the summands or their rescaling. We mention the papers \cite{AllmanMatiasRhodes09},
\cite{AnandkumarGeHsuKakadeTelgarsky14}, \cite{AppellofDavidson81} for examples of applications.

When $T$ is a sufficiently general form of rank $r$, and $rn+r+n+1<\binom{d+n}n$ (technically, when we are 
in the range of {\it subgeneric} ranks), then the identifiability of $T$ is known to hold, with the exclusion of a short list of 
cases for $(n,d,r)$, which are completely described (see \cite{COttVan17a}).

On the other hand, for specific forms $T$, the question of the computation of the rank and the identifiability of $T$ 
is still widely open, except for small values of the rank. More in details, given a form $T$, which arises either
from theoretical reasons or from experimental data, it is often possible to determine a specific decomposition
$T=L_1^d+\dots+L_r^d$, but it remains unclear whether the decomposition computes the rank or it is unique,
i.e. it is unclear if we can exclude the existence of a second decomposition with $r'\leq r$ summands.

For specific forms, of which we know a decomposition $T=L_1^d+\dots+L_r^d$, the most famous criterion
to exclude the existence of a second decomposition with  $r'\leq r$ summands goes back to the 70's, to the celebrated paper
by Kruskal \cite{Kruskal77}. The criterion, which is valid even for non-symmetric tensors but holds
only for small values of $r$, is based on the notion of {\it Kruskal's ranks} of a decomposition (see Definition \ref{krur} below): if
$r$ satisfies an inequality in terms of the Kruskal's ranks, then $r$ is the rank of $T$ and 
the decomposition is unique. Derksen \cite{Derksen13}  proved that the inequality is sharp:  
one can not hope to improve Kruskal's criterion by using just the Kruskal's ranks of the decomposition.

In the specific case of symmetric tensors, however, other methods have been introduced to study the 
uniqueness of a decomposition. One of them, based on the study of the kernels of catalecticant maps, 
can be found in \cite{MassaMellaStagliano18}. Another method, based on the Kruskal's original criterion, 
is described in \cite{COttVan17b} and it is based on the computation of the Kruskal's ranks of a {\it reshaping} 
of a decomposition, i.e. the Kruskal's ranks of the points obtained by raising the $L_i$'s to some
partial powers $d_1,d_2,d_3$, with $d_1+d_2+d_3=d$. Both methods usually work in a range wider than the range
in which the original Kruskal's criterion can be applied. Thus, e.g., the determination of the reshaped Kruskal's rank
can exclude Derksen's examples in the symmetric case (see \cite{BallC12}, \cite{BallC13} for other methods). Yet, all the previous methods will give no answer when $r$ grows enough. For instance, for the case of ternary septics, i.e. the case
$n=2$, $d=7$,  subgeneric ranks (for which identifiability holds
generically) are ranks $r=1,\dots, 11$, but the original Kruskal's criterion will give no answer
for $r>7$, while both the catalecticant method or the reshaped Kruskal's method will work only for $r\leq 10$.

The aim of this paper is an extension of a method, introduced in \cite{COttVan17b} and  in \cite{AngeCVan18}
for the case of forms of degree $4$, which can determine that $r$ is the rank of $T$ and the decomposition is unique,
even in a range in which both the catalecticant method or the reshaped Kruskal's method do not work.
The method is still based on the computation of the reshaped Kruskal's ranks of the decomposition,
but the conclusion is obtained with advanced tools of algebraic geometry: the analysis of the Hilbert function
of the set of projective points associated with a decomposition.

We apply the method to ternary forms of degree $7$, and we find that if $T$ has a decomposition
$T=L_1^d+\dots+L_r^d$ whose reshaped Kruskal's ranks are general, then $T$ has rank $r$ and the
decomposition is unique (up to trivialities). We obtain thus an effective criterion for the identifiability 
of specific symmetric tensors, which definitively improves our previous knowledge.
 The proof of the criterion is based on results for the analysis of the postulation of finite sets in
 projective spaces (e.g. on the Cayley-Bacharach property)
 which, we believe, have an independent theoretical interest for investigators in the field.
In particular, our result also proves that, for a general choice of the
linear forms $L_i$'s, the span of $L_1^d,\dots,L_r^d$, which is a subset of the $r$-th secant
variety $S$ of the Veronese image of degree $7$ of $\Pj^2=\Pj(\mathbb C^3)$, does not contain singular points
of $S$, outside those generated by a proper subset of the powers $L_i^d$.

In section \ref{cub} we consider forms of any degree, with a decomposition in which the Kruskal's ranks
are not maximal, but are maximal modulo the fact that the points representing the decomposition lie
in a plane cubic curve. We show also in this case that, with the same methods, one can prove the identifiability of the
form $T$.
 
 We end by noticing that  ternary septics $T$ are the last numerical case in which the analysis of the
 Kruskal's rank of a decomposition is sufficient to conclude, outside a (Zariski closed) set of measure $0$, that
 a given decomposition is the unique one that computes the rank of $T$. In a forthcoming series of papers we will prove that,
 already for ternary optics, the linear span of every general decomposition contains both identifiable and not identifiable points.

\vskip.3cm
\paragraph{\textbf{Acknowledgements.}} 
The first two authors are members of the Italian GNSAGA-INDAM and are supported by the Italian PRIN 2015 - 
Geometry of Algebraic Varieties (B16J15002000005).

\section{Notation}\label{nota} 

We work over the complex field $\mathbb C$.

For any finite subset $A$ of the projective space $\Pj^n=\Pj(\C^{n+1})$, we denote by $\ell(A)$ the cardinality of $A$.

We call $v_d:\Pj^n\to \Pj^N$, $N = \binom{n+d}d - 1 $,  the Veronese embedding of degree $d$.
The space $\Pj^N$ can be identified with $\Pj(Sym^d(\C^{n+1}))$. So,
the points of $\Pj^N$ can be identified, up to scalar multiplication, with forms $T$ of degree $d$ in $n+1$ variables.

 By abuse, we will denote by $T$ both the form in $Sym^d(\C^{n+1})$ and the point in $\Pj^N$ which represents $T$.
 
 The form $T$ belongs to the \emph{Veronese variety} $v_d(\Pj^n)$ if and only if $T=L^d$,
 for some linear form $L$.

\smallskip

With the above notations we give the following definitions.

\begin{definition}
Let $A \subset \Pj^n$ be a finite set, $A=\{P_1,\dots, P_r\}$. $A$ is a \emph{decomposition} of $T\in \Pj(Sym^d(\C^{n+1}))$ 
if $ T\in \langle v_d(A) \rangle$, 
the linear space spanned by the points of $v_d(A)$. In other words, for a choice of scalars $a_i$'s,
$$T = a_1v_d(P_1)+\dots + a_rv_d(P_r). $$
The number $\ell(A)$ is the \emph{length} of the decomposition.

The decomposition $A$ is \emph{minimal} or \emph{non-redundant} if $T$ is not contained in the span of $v_d(A')$, for any proper subset
$A'\subset A$. In particular, if $v_d(A)$ is not linearly independent, then $A$ can not be minimal.
\end{definition}

If we identify points $P\in\Pj^n=\Pj(\C^{n+1})$ with linear forms $L$ in $n+1$ variables, then the image $v_d(L)$ corresponds
to the power $L^d$, so that a decomposition of $T\in \Pj(Sym^d(\C^{n+1}))$ corresponds to a Waring decomposition of a form of degree $d$.

\begin{definition}\label{krur}
For a finite set $A \subset \Pj^n$, the \emph{Kruskal's rank} $k(A)$ of $A$ is the maximum $k$ for which any subset
of cardinality $\leq k$ of $A$ is linearly independent.

The Kruskal's rank $k(A)$ is bounded above by $n+1$ and $\ell(A)$. If $A$ is general enough,
then $k(A)=\min\{n+1, \ell(A)\}$. 
\end{definition}

For instance, if $A\subset \Pj^3$ is a set of cardinality $5$, with a subset of $4$ points on a plane and no
three points aligned, then $k(A)=3$.

\begin{remark} It is straightforward that $k(A)$ attains the maximum $\min\{n+1, \ell(A)\}$ when all subsets of $A$
cardinality at most $n+1$ are linearly independent. In this case, for any subset $A'\subset A$ one has
$k(A')=\min\{n+1, \ell(A')\}$.

Notice also that for any subset $A'\subset A$, we have $k_{A'}\geq \min\{\ell(A'), k_A\}$.
\end{remark}

We can use the Veronese maps to define the higher Kruskal's ranks of a finite set $A$.

\begin{definition}
For a finite set $A \subset \Pj^n$, the \emph{$d$-th Kruskal's rank} $k_d(A)$ of $A$ is the Kruskal's rank
of the image of $A$ in the Veronese map $v_d$.

Thus, the Kruskal's rank $k(A)$ coincides with the first Kruskal's rank $k_1(A)$.
\end{definition}

The $d$-th Kruskal's rank $k_d(A)$ is thus bounded by $\min\{\ell(A), \binom{n+d}n\}$.

\begin{remark}\label{ksub} Since the projective spaces are irreducible, then any subset of a general finite set $A$ is general.
Thus one can prove that, for a sufficiently general subset $A\subset\Pj^n$, then all the
Kruskal's ranks $k_d(A)$ are maximal.
\end{remark}

\section{Preliminary results} 

We collect in this section the main technical tools for the investigation of the identifiability
of symmetric tensors. For the proofs of most of them, we refer to Sect. 2 of \cite{C} and Sect. 2.2 of \cite{AngeCVan18}.

\begin{definition}
Let $Y\subset \C^{n+1} $ be an ordered, finite set of cardinality $\ell $ of vectors. Fix an integer $ d \in \N $. 

The \emph{evaluation map of degree $d$ on $Y$} is the linear map
$$ ev_{Y}(d): Sym^d(\C^{n+1}) \to \C^\ell $$ 
which sends $ F \in Sym^d(\C^{n+1}) $ to the evaluation of $ F$ at the vectors of $Y$. 

Let $Z \subset \Pj^n $ be a finite set. Choose a set of homogeneous coordinates for the points of $Z$.
We get an ordered set of vectors $Y\subset \C^{n+1} $, for which the evaluation map  $ev_{Y}(d)$  is defined for every $d$.

If we change the choice of the homogeneous coordinates for the points of the fixed set $Z$,  the evaluation map
changes, but all the evaluation maps have the same rank. 
So, we can define the  \emph{Hilbert function} of $Z$ as the map:
$$ h_Z : \Z \to \N \qquad h_Z(d) = \rank(ev_{Y}(d)) .$$ 
We point out that the Hilbert function does not depend on the choice of the coordinates, as well as it 
does not vary after a change of coordinates in $\Pj^n$.

We define the \emph{first difference of the Hilbert function} $Dh_Z$ of $Z$ as:
$$ Dh_Z(j) = h_Z(j)-h_Z(j-1),\quad j \in \Z .$$
\end{definition}

Let $v_d: \Pj^n\to \Pj^N$ be the $d$-th Veronese embedding of $\Pj^n$.
For any finite set $Z\subset \Pj^n$, and for any $d \geq 0 $, the value $ h_Z(d) $ determines the dimension 
of the span of $v_d(Z)$. Indeed we have the following straightforward fact:

\begin{proposition}\label{dimspan}
Let $v_d: \Pj^n\to \Pj^N$ be the $d$-th Veronese embedding of $\Pj^n$.
$$ h_Z(d) = \dim (\langle v_d(Z) \rangle) +1. $$
\end{proposition}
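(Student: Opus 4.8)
The plan is to reduce the statement to a comparison of two linear-algebraic quantities computed from one and the same matrix. First I would fix the basis of $Sym^d(\C^{n+1})$ given by the degree-$d$ monomials $x^\alpha$ (with $|\alpha|=d$), together with the standard basis of $\C^\ell$ indexed by the points of $Z$. With respect to these bases, the evaluation map $ev_Y(d)$ is represented by the $\ell\times(N+1)$ matrix $M$ whose entry in the row of $P_i$ and the column of $x^\alpha$ is the value $y_i^\alpha$, where $y_i\in\C^{n+1}$ is the chosen representative of $P_i$. By the definition of the Hilbert function, $h_Z(d)=\rank(ev_Y(d))=\rank(M)$.

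The key observation is that the $i$-th row of $M$, namely the tuple $(y_i^\alpha)_{|\alpha|=d}$, is exactly a choice of homogeneous coordinates for the point $v_d(P_i)\in\Pj^N$; this is precisely the defining formula of the Veronese embedding $v_d$. Hence the row space of $M$ is the linear subspace $W\subset\C^{N+1}$ spanned by the representative vectors of $v_d(P_1),\dots,v_d(P_\ell)$, and therefore $\rank(M)=\dim_{\C}(W)$.

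Finally I would invoke the standard relation between the dimension of a projective linear span and the dimension of the corresponding vector subspace: since $\langle v_d(Z)\rangle=\Pj(W)$, one has $\dim(\langle v_d(Z)\rangle)=\dim_{\C}(W)-1$. Combining the three steps yields $h_Z(d)=\dim_{\C}(W)=\dim(\langle v_d(Z)\rangle)+1$, as claimed.

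There is no genuine obstacle here; the statement is in essence a bookkeeping identity, which is why it is called straightforward. The only points deserving a word of care are that $\rank(M)$ is independent of the choice of homogeneous representatives $y_i$ (already remarked when the Hilbert function was defined, since changing a representative only rescales the corresponding row and thus leaves the rank unchanged), and the shift by $1$ arising from the passage between the affine span in $\C^{N+1}$ and the projective span in $\Pj^N$.
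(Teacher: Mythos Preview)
Your argument is correct and is exactly the kind of verification the paper has in mind: the paper does not actually give a proof of this proposition, labeling it a ``straightforward fact'' and moving on. Your write-up simply unpacks that word by identifying the rows of the evaluation matrix with homogeneous coordinate vectors of the $v_d(P_i)$, which is the only thing there is to see; the caveats you mention (independence of the choice of representatives, and the $+1$ shift between vector-space and projective dimension) are the right points to flag.
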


From the previous proposition it follows that the Kruskal's rank $k_d(Z)$ is maximal
if and only if every subset of cardinality at most $N+1=\binom{n+d}n$ of $v_d(Z)$ 
is linearly independent. 
\smallskip

Several properties of the Hilbert functions and their differences are well known
in Algebraic Geometry. We will need in particular the following facts (for the proofs, see Sect. 2 of \cite{C}).

\begin{proposition} \label{inclu} If $Z' \subset Z$, then for every $d$ 
we have $h_{Z'}(d) \leq h_Z(d) $ and $Dh_{Z'}(d) \leq Dh_Z(d).$
\end{proposition}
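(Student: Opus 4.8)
The plan is to translate both inequalities into statements about the homogeneous ideals of $Z'$ and $Z$, and then to study the graded quotient module they determine. For a finite set $W\subset\Pj^n$ with coordinate vectors $Y$, write $I_W(d)=\ker(ev_Y(d))\subset Sym^d(\C^{n+1})$ for the space of degree-$d$ forms vanishing on $W$; the rank-nullity theorem applied to $ev_Y(d)$ gives $h_W(d)=\binom{n+d}{n}-\dim I_W(d)$. Since $Z'\subset Z$, every form vanishing on $Z$ vanishes on $Z'$, so $I_Z(d)\subset I_{Z'}(d)$ for all $d$. The first inequality is then immediate, since $h_{Z'}(d)=\binom{n+d}{n}-\dim I_{Z'}(d)\le\binom{n+d}{n}-\dim I_Z(d)=h_Z(d)$. (Equivalently, one may invoke Proposition \ref{dimspan} together with the obvious inclusion $\langle v_d(Z')\rangle\subset\langle v_d(Z)\rangle$.)

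For the second inequality I would introduce the graded module $M=I_{Z'}/I_Z$, whose graded piece $M_d=I_{Z'}(d)/I_Z(d)$ has dimension
$$\varphi(d):=\dim I_{Z'}(d)-\dim I_Z(d)=h_Z(d)-h_{Z'}(d)\ge 0.$$
A direct computation shows that
$$Dh_Z(d)-Dh_{Z'}(d)=\bigl(h_Z(d)-h_{Z'}(d)\bigr)-\bigl(h_Z(d-1)-h_{Z'}(d-1)\bigr)=\varphi(d)-\varphi(d-1),$$
so the desired inequality $Dh_{Z'}(d)\le Dh_Z(d)$ is equivalent to the assertion that the Hilbert function $\varphi$ of the module $M$ is non-decreasing.

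To establish monotonicity of $\varphi$, my plan is to use multiplication by a general linear form. Because $Z$ is a \emph{finite} set of points, a general $L\in Sym^1(\C^{n+1})$ is nonzero at every point of $Z$, and I claim the multiplication map $\times L\colon M_{d-1}\to M_d$ is then injective for $d\ge 1$: if $f\in I_{Z'}(d-1)$ satisfies $Lf\in I_Z(d)$, then $Lf$ vanishes on $Z$, and since $L(P)\neq 0$ for each $P\in Z$ we deduce $f(P)=0$ for all $P\in Z$, i.e.\ $f\in I_Z(d-1)$ and $[f]=0$ in $M_{d-1}$. Injectivity yields $\varphi(d-1)=\dim M_{d-1}\le\dim M_d=\varphi(d)$, which is exactly what we need. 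The main point of the argument is precisely this injectivity, namely that a general $L$ is a non-zerodivisor on $M$; this is where the hypothesis that $Z$ is a finite set is essential, since it guarantees that a general hyperplane avoids all of its points. The remaining steps are routine dimension bookkeeping.
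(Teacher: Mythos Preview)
Your argument is correct. The first inequality follows immediately from the inclusion $I_Z(d)\subset I_{Z'}(d)$, and for the second you reduce to showing that $\varphi(d)=\dim\bigl(I_{Z'}(d)/I_Z(d)\bigr)$ is non-decreasing, which you obtain by observing that multiplication by a linear form $L$ avoiding the finite set $Z$ is injective on the graded module $I_{Z'}/I_Z$. The verification of injectivity is sound: if $Lf$ vanishes on $Z$ and $L$ is nonzero at each point of $Z$, then $f$ already vanishes on $Z$.

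As for comparison with the paper: the paper does not actually prove this proposition in the text. It lists it among ``facts'' whose proofs are deferred to Section~2 of the reference \cite{C}. So there is no in-paper argument to compare against; your proof supplies what the paper omits. The technique you use---choosing a general linear form that is a non-zerodivisor on the relevant module and exploiting the resulting injectivity in each degree---is the standard device behind results of this type (it is also, for instance, the idea behind Proposition~\ref{nonincr}), so your approach is very much in the spirit of the tools the paper relies on.
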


\begin{proposition}\label{nonincr}
Assume that  for some $j>0$ we have $Dh_Z(j) \leq j$. Then:
$$ Dh_Z(j) \geq Dh_Z(j+1). $$
In particular, if  for some $j>0$, $Dh_Z(j)=0 $, then $Dh_Z(i)=0$ for all $i\geq j$.
\end{proposition}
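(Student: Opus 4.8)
The plan is to reinterpret the Hilbert function algebraically and then reduce the claim to a growth estimate for Artinian algebras. Write $S=\C[x_0,\dots,x_n]$ and let $I_Z\subset S$ be the homogeneous ideal of $Z$. The kernel of the evaluation map $ev_Y(d)$ is exactly the space $(I_Z)_d$ of degree-$d$ forms vanishing on $Z$, so $h_Z(d)=\binom{n+d}{d}-\dim(I_Z)_d=\dim(S/I_Z)_d$. Thus $Dh_Z$ is the first difference of the Hilbert function of the graded ring $S/I_Z$, and the assertion becomes purely algebraic. Since $Z$ is a finite set of reduced points, the associated primes of $S/I_Z$ are precisely the ideals of the points of $Z$, and a general linear form $\ell$ avoids all of them; hence $\ell$ is a non-zero-divisor on $S/I_Z$. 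Multiplication by $\ell$ then yields, for every $d$, the short exact sequence
$$ 0 \to (S/I_Z)_{d-1} \xrightarrow{\ \ell\ } (S/I_Z)_d \to B_d \to 0, \qquad B := S/(I_Z+(\ell)), $$
so that $\dim B_d = h_Z(d)-h_Z(d-1)=Dh_Z(d)$. Here $B$ is a standard graded Artinian algebra (a quotient of a polynomial ring in $n$ variables), and $Dh_Z$ is exactly its Hilbert function.

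It then remains to prove the following statement about an arbitrary standard graded Artinian algebra $B$: if $\dim B_j\le j$, then $\dim B_{j+1}\le \dim B_j$. This is the boundary case of Macaulay's theorem on the growth of Hilbert functions. Setting $c=\dim B_j$ and writing $c$ in its $j$-th binomial (Macaulay) expansion, the hypothesis $c\le j$ forces $c=\binom{j}{j}+\binom{j-1}{j-1}+\cdots+\binom{j-c+1}{j-c+1}$, a sum of $c$ terms each equal to $1$. Macaulay's bound then gives $\dim B_{j+1}\le c^{\langle j\rangle}=\binom{j+1}{j+1}+\cdots+\binom{j-c+2}{j-c+2}=c$, which is precisely $Dh_Z(j+1)\le Dh_Z(j)$.

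The ``in particular'' clause follows by iterating the inequality just obtained. If $Dh_Z(j)=0$ for some $j>0$, then in particular $Dh_Z(j)\le j$, so $0\le Dh_Z(j+1)\le Dh_Z(j)=0$ forces $Dh_Z(j+1)=0$. Since then $Dh_Z(j+1)=0\le j+1$, the same step applies again, and induction gives $Dh_Z(i)=0$ for all $i\ge j$.

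The main obstacle is the second step. Either one invokes Macaulay's growth theorem as a black box, in which case the only genuine content is the binomial computation showing that the bound collapses to equality when $c\le j$; or, for a self-contained treatment, one must establish this boundary case directly, for instance by a further general hyperplane restriction on $B$ together with an induction on the number of variables, controlling the image of the multiplication map $B_1\otimes B_j\to B_{j+1}$. The reduction in the first step is routine, provided one checks carefully that a general linear form is a genuine non-zero-divisor, which uses exactly that $Z$ is reduced and zero-dimensional so that the irrelevant ideal is not among the associated primes of $S/I_Z$.
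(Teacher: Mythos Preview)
Your argument is correct and is in fact the standard proof of this well-known result: pass to the Artinian reduction $B=S/(I_Z+(\ell))$ by a general linear form, identify $Dh_Z$ with the Hilbert function of $B$, and invoke Macaulay's bound in the boundary case $c\le j$. The paper itself does not supply a proof of this proposition; it simply records it as a known fact and points the reader to Section~2 of \cite{C}, so there is nothing to compare against beyond noting that your approach is exactly the classical one underlying such citations. One tiny remark: your Macaulay computation is written for $c\ge 1$, while the ``in particular'' clause applies the main inequality with $c=0$; this case is of course immediate since $B$ is standard graded (so $B_j=0$ forces $B_{j+1}=B_1\cdot B_j=0$), and you already gesture at this via the multiplication map, but it would be cleaner to state it explicitly rather than route it through the Macaulay expansion.
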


We introduce the following notation, which comes from a cohomological interpretation of the 
Hilbert function of $Z$. We will use it as a mere symbolism.

\begin{definition} For every finite set $Z$ and for every degree $d$, define:
$$h^1_Z(d)= \ell(Z)-h_Z(d) = \sum_{j=d+1}^\infty Dh_Z(j).$$
In particular, from Proposition \ref{dimspan} we get:
\begin{equation}\label{h1}
\dim (\langle v_d(Z)\rangle) = \ell(Z)-1- h^1_Z(d).
\end{equation}
\end{definition}

Often, we will use the properties of the Hilbert function of the union of two different decompositions 
of a form. Thus, for our application the following property, which is a consequence of a 
straightforward application of the Grassmann formula, has particular interest.

\begin{proposition}\label{cap} Let $A,B\subset \Pj^n$ be \emph{disjoint} finite sets and set $Z=A\cup B$.
Then for any $d$, the spans of $v_d(A)$ and $v_d(B)$ satisfy the following formula:
$$\dim(\langle v_d(A)\rangle\cap \langle v_d(B)\rangle)+ 1 = h^1_Z(d).$$
\end{proposition}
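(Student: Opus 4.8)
The plan is to reduce the statement to the classical Grassmann formula for linear subspaces, after converting every projective span into a value of a Hilbert function by means of Proposition \ref{dimspan}. First I would record the elementary remark that, since $Z=A\cup B$, the span $\langle v_d(Z)\rangle$ is exactly the projective join of $\langle v_d(A)\rangle$ and $\langle v_d(B)\rangle$, i.e. the smallest linear subspace of $\Pj^N$ containing both of them: this is simply the fact that the span of a union of finite sets is the sum (join) of the two spans. Thus the three spans are linked by a single linear-algebra relation, and no geometry beyond Proposition \ref{dimspan} is needed.

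Next I would apply the Grassmann formula to the projective subspaces $V_A=\langle v_d(A)\rangle$ and $V_B=\langle v_d(B)\rangle$:
$$\dim(V_A\vee V_B)=\dim V_A+\dim V_B-\dim(V_A\cap V_B),$$
valid with the usual convention $\dim\emptyset=-1$. Solving for the intersection and substituting $\dim V_A=h_A(d)-1$, $\dim V_B=h_B(d)-1$ and $\dim(V_A\vee V_B)=\dim\langle v_d(Z)\rangle=h_Z(d)-1$, all supplied by Proposition \ref{dimspan}, the three $-1$'s collapse and I obtain
$$\dim\big(\langle v_d(A)\rangle\cap\langle v_d(B)\rangle\big)+1=h_A(d)+h_B(d)-h_Z(d).$$

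Finally I would match the right-hand side with $h^1_Z(d)$. By definition $h^1_Z(d)=\ell(Z)-h_Z(d)$, and the disjointness of $A$ and $B$ gives $\ell(Z)=\ell(A)+\ell(B)$; hence the claimed identity is equivalent to $h_A(d)+h_B(d)=\ell(A)+\ell(B)$, i.e. to $h_A(d)=\ell(A)$ and $h_B(d)=\ell(B)$. This is precisely the assertion that $v_d(A)$ and $v_d(B)$ are linearly independent sets, equivalently $h^1_A(d)=h^1_B(d)=0$, which is automatic in the situation of interest because $A$ and $B$ arise as minimal (non-redundant) decompositions, and a non-redundant decomposition has linearly independent Veronese image.

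I expect this last identification to be the only substantive point of the argument. The Grassmann step is purely formal bookkeeping with the $\pm1$ shifts between affine and projective dimension; what actually makes the formula come out as stated is the independence of the two images. Indeed, without that hypothesis the computation above yields instead the corrected relation $\dim(V_A\cap V_B)+1=h^1_Z(d)-h^1_A(d)-h^1_B(d)$, so the main thing to verify (or to assume in the intended setting) is that the correction terms $h^1_A(d)$ and $h^1_B(d)$ vanish.
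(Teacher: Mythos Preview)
Your approach via the Grassmann formula is exactly what the paper has in mind: the paper's own ``proof'' is merely a citation, and the text introducing the proposition already describes it as ``a consequence of a straightforward application of the Grassmann formula.'' You have moreover correctly isolated the hidden hypothesis: the identity $\dim(\langle v_d(A)\rangle\cap\langle v_d(B)\rangle)+1=h^1_Z(d)$ only follows from Grassmann when $h^1_A(d)=h^1_B(d)=0$, and in general one gets instead $h^1_Z(d)-h^1_A(d)-h^1_B(d)$. This extra condition is tacitly assumed throughout the paper, since in every use of the proposition $A$ and $B$ are minimal decompositions and hence their Veronese images are linearly independent; your remark making this explicit is a genuine clarification of the statement.
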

\begin{proof} See \cite{AngeCVan18}, Sect. 6.
\end{proof}

Next, we define the {\it Cayley-Bacharach} property for a finite set in a projective space.

\begin{definition}\label{CB}
A finite set $Z\subset \Pj^n$ satisfies the \emph{Cayley-Bacharach property in degree $d$}, 
abbreviated as $CB(d)$, if, for any $P \in Z$, 
 every form of degree $d$ vanishing at $ Z\setminus\{ P\}$
also vanishes at $P$.
\end{definition}

\begin{example} 
Let $Z$ be a set of $6$ points in $\Pj^2$. 

If the $6$ points are general, then $Dh_Z(i)=i+1$ for $i\in\{0,1,2\}$. Moreover  $Z$ satisfies $CB(1)$.
Since $h_Z(2)=6$, $Z$ does not satisfy $CB(2)$.

If $Z$ is a general subset of an irreducible conic, then $Dh_Z(2)=2$ and $Dh_Z(3)=1$,  
moreover $Z$ satisfies $CB(2)$, hence it satisfies $CB(1)$.
 
If $Z$ has $5$ points on a line plus one point off the line, then  $Dh_Z$ is given by the
following table:

\begin{center}\begin{tabular}{c|ccccccc}
$d$ & 0 & 1 & 2 & 3 & 4 & 5 &  \dots \\ \hline
$Dh_Z(d)$ & 1 & 2 & 1 & 1 & 1 & 0 &  \dots
\end{tabular} 
\end{center}
Moreover $Z$ does not satisfy $CB(1)$.
\end{example}

\begin{remark}\label{CBprop} 
If $Z$ satisfies $CB(d)$, then it satisfies $CB(d-1)$ too. Otherwise, one could find 
$P \in Z$ and a  hypersurface $F \subset \Pj^n$ of degree $d-1$ such that $Z \setminus \{P\} 
\subset F$ and $P \notin F$. Therefore, if $H\subset \Pj^n$ is a hyperplane missing
$P$, then $F \cup H$ contains $Z \setminus \{P\}$ and misses $P$,
contradicting the $CB(d)$ hypothesis. 
\end{remark}

The main reason to introduce the Cayley-Bacharach properties relies in the following observation.

\begin{proposition} \label{CBonHilb}
$Z$ satisfies $CB(d)$ if and only if for all $P\in Z$ and for all $j\leq  d$
we have $h_Z(j)=h_{Z\setminus\{P\}}(j)$. I.e. $Z$ satisfies $CB(d)$ if and only if
$$Dh_Z(j)=Dh_{Z\setminus\{P\}}(j) \quad \forall j\leq d.$$
\end{proposition}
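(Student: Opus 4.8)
The plan is to translate the Cayley-Bacharach condition into a statement about the kernels of the evaluation maps, and then read off its effect on the Hilbert function. Fix a choice of homogeneous coordinates, giving the ordered set of vectors $Y$, and for each degree $j$ write $I_Z(j)\subset Sym^j(\C^{n+1})$ for the kernel of $ev_Y(j)$, that is, the space of forms of degree $j$ vanishing on $Z$. Applying rank-nullity to $ev_Y(j)$ gives
$$h_Z(j)=\dim Sym^j(\C^{n+1})-\dim I_Z(j),$$
and the analogous identity holds with $Z\setminus\{P\}$ in place of $Z$.

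The key step is a codimension count. Writing $Z'=Z\setminus\{P\}$, one has the obvious inclusion $I_Z(j)\subseteq I_{Z'}(j)$, and evaluation at the point $P$ defines a linear map $I_{Z'}(j)\to\C$ whose kernel is exactly $I_Z(j)$. Hence $\dim I_{Z'}(j)-\dim I_Z(j)\in\{0,1\}$, the value being $0$ precisely when every form of degree $j$ vanishing on $Z\setminus\{P\}$ also vanishes at $P$. Subtracting the two rank-nullity identities yields $h_Z(j)-h_{Z\setminus\{P\}}(j)=\dim I_{Z'}(j)-\dim I_Z(j)$, so for each fixed $P$ and $j$ the equality $h_Z(j)=h_{Z\setminus\{P\}}(j)$ holds if and only if every degree-$j$ form vanishing on $Z\setminus\{P\}$ vanishes at $P$. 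Quantifying over all $P\in Z$, this says exactly that $Z$ satisfies $CB(j)$ if and only if $h_Z(j)=h_{Z\setminus\{P\}}(j)$ for every $P$.

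It then remains to upgrade the single degree $j=d$ to all $j\leq d$. For the forward implication, if $Z$ satisfies $CB(d)$ then by Remark \ref{CBprop} it satisfies $CB(j)$ for every $j\leq d$, and the equivalence just established gives $h_Z(j)=h_{Z\setminus\{P\}}(j)$ for all such $j$ and all $P$. The reverse implication is immediate, since the hypothesis at $j=d$ is precisely $CB(d)$. Finally, the passage between equality of the functions $h_Z$ and equality of their first differences $Dh_Z$ is purely formal: both Hilbert functions vanish for negative argument, so $h_Z(j)=h_{Z\setminus\{P\}}(j)$ for all $j\leq d$ holds if and only if $Dh_Z(j)=Dh_{Z\setminus\{P\}}(j)$ for all $j\leq d$, by telescoping the first differences.

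The only input beyond elementary linear algebra is Remark \ref{CBprop}, which is what lets one descend from degree $d$ to all lower degrees; everything else is the codimension-one bookkeeping for the kernels $I_Z(j)$ of the evaluation maps. I therefore expect no genuine obstacle, the sole point requiring care being the correct identification of $I_Z(j)$ as the kernel of $I_{Z'}(j)\to\C$ so that the codimension is controlled to be at most one.
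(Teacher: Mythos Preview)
Your proof is correct and follows the same approach as the paper: both identify $CB(j)$ with equality of the kernels $I_Z(j)=I_{Z\setminus\{P\}}(j)$ of the evaluation maps, then invoke Remark~\ref{CBprop} to descend from degree $d$ to all $j\leq d$. Your version is in fact more complete, as the paper's proof only spells out the forward implication and leaves the converse and the passage to $Dh_Z$ implicit.
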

\begin{proof} Fix coordinates for the points of $Z$ and consider the corresponding evaluation map
$\C[x_0,\dots,x_n]\to \C^{\ell(Z)}$ in degree $j$. The kernel is the space
of forms of degree $j$ vanishing on $Z$. Since $Z$ satisfies $CB(j)$ for all $j\leq d$,
then, for all $P\in Z$, every form of degree $j\leq d$ that vanishes on $Z\setminus\{P\}$
also vanishes on $Z$. Thus, in any degree $j\leq d$  the kernel of the evaluation map 
does not change if we forget any point $P\in Z$. Hence $h_Z(j)=h_{Z\setminus\{P\}}(j)$.
\end{proof}

\begin{remark}\label{CBcons} 
We will use the previous proposition as follows.

Assume that $Z$ does not satisfy $CB(d)$. Then there exists a points $P\in Z$ and an integer $j_0\leq d$ such that
$Dh_Z(j_0)>Dh_{Z\setminus\{P\}}(j_0)$. Since $\sum Dh_Z(i)=\ell(Z) =\ell(Z\setminus\{P\})+1 =
\sum Dh_{Z\setminus\{P\}}(i)$, it follows that $Dh_Z(i) = Dh_{Z\setminus\{P\}}(i)$ for all $i\neq j_0$.
In particular, the equality holds for $i\geq d+1$. Thus there exists $P\in Z$ such that 
$$h^1_Z(d) = \sum_{i=d+1}^\infty Dh_Z(i) = \sum_{i=d+1}^\infty Dh_{Z\setminus\{P\}}(i)= h^1_{Z\setminus\{P\}}(d).$$
\end{remark}

The following proposition, which gives a strong constraint on the Hilbert function of sets with a Cayley-Bacharach property,
 is a refinement of a result due to Geramita, Kreuzer, and Robbiano (see Corollary 3.7 part (b) and (c) of \cite{GerKreuzerRobbiano93}).

\begin{theorem}\label{GKRext}
If a finite set $ Z \subset \Pj^{n} $ satisfies $\mathit{CB}(i)$, then for any $ j $ such that $ 0 \leq j \leq i+1 $ we have
$$ Dh_{Z}(0)+Dh_{Z}(1)+\cdots + Dh_{Z}(j) \leq Dh_{Z}(i+1-j)+\cdots +Dh_{Z}(i+1).$$
\end{theorem}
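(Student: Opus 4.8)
The plan is to recast the inequality in closed form and then reduce, by a point-removal argument, to the known maximal-level case of Geramita--Kreuzer--Robbiano. Since $\sum_{k=0}^{j}Dh_Z(k)=h_Z(j)$ and the right-hand side telescopes to $h_Z(i+1)-h_Z(i-j)$, the assertion is equivalent to
\[ h_Z(j)+h_Z(i-j)\le h_Z(i+1)\qquad (0\le j\le i+1). \]
As orientation, the case $j=0$ reads $Dh_Z(i+1)\ge 1$: if it failed, then $Dh_Z(i+1)=0$ with $i+1>0$ would force, by Proposition~\ref{nonincr}, $Dh_Z(k)=0$ for all $k\ge i+1$, hence $h_Z(i)=\ell(Z)$; but a set imposing independent conditions in degree $i$ admits, for each $P$, a form of degree $i$ vanishing on $Z\setminus\{P\}$ and not at $P$, contradicting $CB(i)$. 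The same remark shows $h_Z(i)<\ell(Z)$ whenever $Z$ is $CB(i)$. The case $j=i+1$ is a trivial equality, so the substance lies in the intermediate values of $j$.

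For $P\in Z$ write $s_Z(P)$ for its separator degree, i.e. the least $d$ with $h_{Z\setminus\{P\}}(d)=h_Z(d)-1$; by Proposition~\ref{CBonHilb}, $CB(i)$ is precisely the statement that $s_Z(P)\ge i+1$ for every $P$. The key reduction step asserts that if $Z$ is $CB(i)$ and $s_Z(P)\ge i+2$ for some $P$, then $Z':=Z\setminus\{P\}$ is again $CB(i)$ and $h_{Z'}(d)=h_Z(d)$ for all $d\le i+1$. The equality of Hilbert functions up to degree $i+1$ is immediate from $d<s_Z(P)$. For the $CB(i)$ property of $Z'$ I would argue by contradiction: a separator $G$ of some $Q\in Z'$ of degree $\le i$ vanishes on $Z\setminus\{P,Q\}$; if $G(P)=0$ it separates $Q$ inside $Z$, against $s_Z(Q)\ge i+1$; if $G(P)\ne0$, multiplying $G$ by a linear form vanishing at $Q$ but not at $P$ produces a separator of $P$ in $Z$ of degree $\le i+1$, against $s_Z(P)\ge i+2$. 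I expect this separator bookkeeping to be the main obstacle, since it is exactly where the Cayley--Bacharach hypothesis is used beyond its numerical shadow $Dh_Z(i+1)\ge 1$.

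Because the displayed inequality involves only $Dh_Z(0),\dots,Dh_Z(i+1)$, the reduction lets me delete, one at a time, every point of separator degree $\ge i+2$ without altering either side. After finitely many deletions I reach a set---still $CB(i)$, with the same truncated Hilbert function---in which every point satisfies $s_Z(P)=i+1$. Then for each $P$ there is a form of degree $i+1$ vanishing on $Z\setminus\{P\}$ and not at $P$; the corresponding evaluation vectors are nonzero multiples of the standard basis, so the evaluation map in degree $i+1$ is surjective and $h_Z(i+1)=\ell(Z)$. Combined with $h_Z(i)<\ell(Z)$ this forces the regularity to equal $i+1$, so the reduced set sits at its maximal Cayley--Bacharach level. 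In that terminal situation the inequality $\sum_{k=0}^{j}Dh_Z(k)\le\sum_{k=i+1-j}^{i+1}Dh_Z(k)$ is exactly the domination of the $h$-vector by its reflection proved by Geramita, Kreuzer and Robbiano (\cite{GerKreuzerRobbiano93}, Cor.~3.7(b),(c)). The whole content of the refinement is thus the reduction above, which detaches the statement from the regularity index and makes the reflected inequality available at every level $i$ for which $CB(i)$ holds.
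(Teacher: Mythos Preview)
The paper does not prove this statement in place; its entire proof is a reference to Theorem~4.9 of \cite{AngeCVan18}. Your argument is correct and is precisely the kind of reduction one expects behind that citation: strip off, one at a time, the points of separator degree at least $i+2$ (showing that $CB(i)$ and the Hilbert function through degree $i+1$ are preserved), land on a set whose regularity index is exactly $i+1$, and then invoke the reflection inequality of Geramita--Kreuzer--Robbiano at the maximal Cayley--Bacharach level. The separator bookkeeping in your Case~2 (multiplying by a linear form through $Q$ but not $P$ to manufacture a degree~$\le i+1$ separator of $P$) is the crux, and it is handled correctly; termination of the deletion process follows since each step keeps $h_Z(i+1)$ fixed while $\ell(Z)$ drops, and the terminal set is nonempty because $\ell\ge h(i+1)\ge 1$. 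So your proposal both recovers the result and makes explicit why it genuinely refines the original GKR statement, which is formulated only at the top level $i=\sigma(Z)-1$.
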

\begin{proof} 
See Theorem 4.9 of \cite{AngeCVan18}.
\end{proof}

\section{Kruskal's criterion for symmetric tensors}

The symmetric version of the Kruskal's criterion for the identifiability of tensors can be found in 
\cite{COttVan17b}. We recall it  for the reader's convenience.

\begin{theorem} \label{K} ({\bf Kruskal's criterion}) Let $T$ be a form of degree $d$ and let $A$ be 
a minimal decomposition of $T$, with $\ell(A)=r$. 
Fix a partition $a,b,c$ of $d$ and call $k_a,k_b,k_c$ the Kruskal's ranks of $v_a(A),v_b(A), v_c(A)$ respectively.
If:
$$ r\leq \frac {k_a+k_b+k_c-2}2,$$
then $T$ has rank $r$ and it is identifiable.
\end{theorem}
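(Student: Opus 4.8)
The plan is to reduce the statement to the classical, non-symmetric Kruskal criterion of \cite{Kruskal77} by means of a \emph{reshaping} of the decomposition. Fix the partition $d=a+b+c$ and consider the natural inclusion (polarization/comultiplication)
$$\rho: Sym^d(\C^{n+1}) \hookrightarrow Sym^a(\C^{n+1})\otimes Sym^b(\C^{n+1})\otimes Sym^c(\C^{n+1}),$$
which on powers of linear forms acts as $L^d\mapsto L^a\otimes L^b\otimes L^c$. Writing $A=\{P_1,\dots,P_r\}$ and $T=\sum_i a_i v_d(P_i)$, we obtain
$$\rho(T)=\sum_{i=1}^r a_i\, v_a(P_i)\otimes v_b(P_i)\otimes v_c(P_i),$$
a decomposition of $\rho(T)$ into $r$ rank-one tensors in the triple tensor product, whose three families of factors are precisely $v_a(A)$, $v_b(A)$ and $v_c(A)$.

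By the very definition of the higher Kruskal's ranks, these three families have Kruskal's ranks $k_a,k_b,k_c$. Since $A$ is minimal, every coefficient $a_i$ is nonzero and the points $P_i$ are distinct, so the displayed expression is a genuine length-$r$ tensor decomposition. The hypothesis $r\le (k_a+k_b+k_c-2)/2$ is exactly the inequality $k_a+k_b+k_c\ge 2r+2$ required by the classical criterion. Applying \cite{Kruskal77} to $\rho(T)$, I conclude that $r$ is the tensor rank of $\rho(T)$ and that the above is the unique decomposition of $\rho(T)$ into at most $r$ rank-one tensors, up to permutation and rescaling of the summands.

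It then remains to transfer this uniqueness to \emph{symmetric} decompositions. Any symmetric decomposition $A'=\{Q_1,\dots,Q_{r'}\}$ of $T$ with $r'\le r$ reshapes, via $\rho$, to a decomposition of $\rho(T)$ into $r'$ rank-one tensors $v_a(Q_j)\otimes v_b(Q_j)\otimes v_c(Q_j)$; in particular the tensor rank of $\rho(T)$ is at most the symmetric rank of $T$. Since the tensor rank equals $r$ by the previous step while $A$ itself exhibits symmetric rank at most $r$, the symmetric rank equals $r$, so $T$ has rank $r$. For identifiability, take any symmetric decomposition $A'$ of length $r$: its reshaping is a length-$r$ decomposition of $\rho(T)$, hence by uniqueness it coincides, as a set of rank-one tensors, with the family $\{v_a(P_i)\otimes v_b(P_i)\otimes v_c(P_i)\}$. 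Each such rank-one tensor $v_a(Q)\otimes v_b(Q)\otimes v_c(Q)$ determines the point $Q$ through its first factor, because the Veronese map $v_a$ is injective. Therefore $A'=A$ up to permutation and rescaling, and $T$ is identifiable.

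The only genuinely delicate point is this final descent. The classical criterion guarantees uniqueness among \emph{all} rank-one tensor decompositions in the triple product, a much larger class than the symmetric ones, so a priori a competing symmetric decomposition could reshape to the same rank-one tensors while scrambling the three Veronese factors. The injectivity of $v_a$ — equivalently, the fact that in a reshaped symmetric rank-one term the three factors are not independent but all recover the same point — is precisely what excludes this possibility and lets me read off a decomposition of $T$ from that of $\rho(T)$.
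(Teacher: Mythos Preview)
Your argument is correct and is precisely the reshaping approach of \cite{COttVan17b}: embed $T$ in the triple tensor product via $L^d\mapsto L^a\otimes L^b\otimes L^c$, invoke the classical Kruskal criterion there, and descend using the injectivity of the Veronese maps. The paper itself does not give a proof of Theorem~\ref{K}; it simply quotes the statement from \cite{COttVan17b}, so your write-up is in fact supplying the argument that the paper only cites.
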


\section{The extension}

In this section, we prove the main result, i.e. an extension of the Kruskal's criterion for the case of
septics in $3$ variables. 

Let $T$ be a septic and consider a minimal decomposition of $T$ with $11$ elements
$A=\{P_1,\dots,P_{11}\}$. We assume that the points of $A$ are general, and more precisely:
\begin{itemize}
\item the Kruskal's rank of $A$ is $3$, i.e. no three points of $A$ are aligned;
\item the Kruskal's rank of $v_3(A)$ is $10$, i.e. no $10$ points of $A$ are contained in a cubic or, equivalently, 
for any subset $A'\subset A$ with $\ell(A')\leq 10$, the set $v_3(A')$ is linearly independent.
\end{itemize}
With these assumptions, we get:

\begin{theorem} $T$ has rank $11$ and it is identifiable.
\end{theorem}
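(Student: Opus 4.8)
The plan is to prove something slightly stronger than the statement: that \emph{every} minimal decomposition $B$ of $T$ with $\ell(B)\le 11$ coincides with $A$. This yields identifiability at once, and since $A$ already realizes $11$ summands while no shorter decomposition survives, it also pins the rank at $11$. So I would argue by contradiction: suppose $B=\{Q_1,\dots,Q_s\}$ is a minimal decomposition of $T$ with $s\le 11$ and $B\neq A$, and write $T=\sum_i a_iv_7(P_i)=\sum_j b_jv_7(Q_j)$ with all $a_i,b_j\neq 0$ by minimality.

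First I would dispose of common points carrying matching coefficients. If some $P\in A\cap B$ has $a_P=b_P$, then $\tilde T:=T-a_Pv_7(P)$ has the two minimal decompositions $A\setminus\{P\}$ and $B\setminus\{P\}$. Now $A\setminus\{P\}$ consists of $10$ points and still satisfies $k_1=3$ and $k_3=10$ (every subset of $A$ of size $\le 10$ is independent under $v_3$), so for the partition $7=3+3+1$ the Kruskal bound becomes $10\le(10+10+3-2)/2=10.5$, and Theorem \ref{K} applies: $A\setminus\{P\}$ is the unique minimal decomposition of $\tilde T$, of length $10=\rank(\tilde T)$. Since $\ell(B\setminus\{P\})\le 10$, this forces $B\setminus\{P\}=A\setminus\{P\}$, hence $B=A$, a contradiction. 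Thus I may assume no common point has a matching coefficient, so that the relation $\sum_i a_iv_7(P_i)-\sum_j b_jv_7(Q_j)=0$ has \emph{all} its coefficients nonzero on the set $Z:=A\cup B$.

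The core is then a Hilbert function estimate for $Z$. Pairing the above relation with a septic $G$ vanishing on $Z\setminus\{P\}$ forces $G(P)=0$, so $Z$ satisfies $CB(7)$ (Definition \ref{CB}). Next I would compute the first differences $Dh_Z$ in low degree: since $A\subset Z$ spans $\Pj^2$ we have $Dh_Z(1)=2$, and since $h_A(3)=10$ (from $k_3(A)=10$) while $h_Z(3)\le 10$, Proposition \ref{inclu} gives $h_Z(3)=10$; together with $Dh_Z(2)\le 3$ and the non‑increasing property (Proposition \ref{nonincr}) this forces $(Dh_Z(0),Dh_Z(1),Dh_Z(2),Dh_Z(3))=(1,2,3,4)$. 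Applying Theorem \ref{GKRext} with $i=7$ and $j=3$ then yields
$$10=Dh_Z(0)+Dh_Z(1)+Dh_Z(2)+Dh_Z(3)\le Dh_Z(5)+Dh_Z(6)+Dh_Z(7)+Dh_Z(8).$$

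Finally I would extract the contradiction from the length bound $\ell(Z)=11+s-\ell(A\cap B)\le 22$. Writing $\ell(Z)=h_Z(3)+Dh_Z(4)+\sum_{l\ge 5}Dh_Z(l)\ge 10+Dh_Z(4)+10$ forces $Dh_Z(4)\le 2$; since $2\le 4$, Proposition \ref{nonincr} makes $Dh_Z$ non‑increasing from degree $4$ on, so $Dh_Z(5)+\dots+Dh_Z(8)\le 4\,Dh_Z(4)\le 8<10$, contradicting the displayed inequality and completing the proof. I expect the main obstacle to be the bookkeeping around common points and coefficients rather than the final count: both the clean statement $CB(7)$ and the crucial value $h_Z(3)=10$ depend on the syzygy being supported on \emph{all} of $Z$, so it is the reduction to the ``no matching coefficient'' case, together with the resulting appeal to the genuine Kruskal criterion at length $10$, that makes the otherwise routine application of Theorem \ref{GKRext} and Proposition \ref{nonincr} go through.
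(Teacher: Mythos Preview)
Your proof is correct and rests on the same two pillars as the paper's: the incompatibility of $CB(7)$ with the Hilbert--function constraints on $Z=A\cup B$ (your final count is exactly Proposition~\ref{CB7}), and Kruskal's criterion applied to a subset of $A$ of length $\le 10$. The organization, however, is essentially the contrapositive of the paper's. The paper first proves that $Z$ cannot satisfy $CB(7)$, then uses Lemma~\ref{CBdis} (via the $h^1$ formula of Proposition~\ref{cap}) to force $A\cap B\neq\emptyset$, and from the \emph{failure} of $CB(7)$ extracts a point $P\in A\cap B$ with $a_P=b_P$; only then does it pass to a shorter tensor $T_0$ and invoke Kruskal. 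You instead dispose of any matching coefficient $a_P=b_P$ \emph{first} by a direct appeal to Kruskal on $A\setminus\{P\}$, and in the remaining case obtain $CB(7)$ by the elementary evaluation--pairing argument (the syzygy $\sum c_Pv_7(P)=0$ with all $c_P\neq 0$ forces every septic through $Z\setminus\{P\}$ to pass through $P$), reaching the same numerical contradiction. Your route avoids the $h^1$/span machinery of Proposition~\ref{cap} and Lemma~\ref{CBdis}, at the price of having to check explicitly that $A\setminus\{P\}$ is still a \emph{minimal} decomposition of $\tilde T$ (this follows since $v_7(A)$ is linearly independent by the minimality of $A$); the paper's route, conversely, packages the $CB$ step into a reusable lemma that it later applies verbatim in Section~\ref{cub}.
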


Notice that the original Kruskal's criterion \ref{K} does not cover this case. Namely if we take a partition $7=3+3+1$,
then under our hypothesis the given decomposition $A$ of $T$ satisfies $k_3=10$, $k_1=3$, but:
$$ 11 \not\leq \frac {10+10+3-2}2.$$
A similar computation holds for any other partition of $7$. Indeed  in section 4 of \cite{COttVan17b} it is pointed out
that the partition $7=3+3+1$ is the one that covers the widest range for the application of Kruskal's criterion.

We will prove the extension of the Kruskal's criterion by contradiction. So, from now on consider the set 
$Z=A\cup B$, where $B$ is a second \emph{minimal} decomposition of $T$ with at most $11$ points.
Since $Z$ contains $A$, then the difference $Dh_Z$ of the Hilbert function of $Z$  is:

\begin{center}\begin{tabular}{c|cccccccccc}
$j$ & $0$ & $1$ & $2$ &   $3$ &   $4$ &   $5$ &  $6$ &  $7$ &  $8$ &\dots \\  \hline
$Dh_Z(j)$ & $1$ & $2$ &   $3$ &   $4$ &   $a_4$ &  $a_5$ &  $a_6$ &  $a_7$ & $a_8$ & $\dots$
\end{tabular}\end{center}
with $a_8>0$, since $T\in \langle v_7(A)\rangle\cap \langle v_7(B)\rangle$ and $A,B$ are minimal, thus the points
of $v_7(Z)$ can not be linearly independent. It follows that $h^1_Z(7)>0$, as in
Proposition \ref{cap}.
Hence also $a_4, a_5, a_6, a_7>0$, by Proposition \ref{nonincr}. 

\begin{proposition}\label{CB7} $ Z $ can not satisfy $ CB(7) $.
\end{proposition}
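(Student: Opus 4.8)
The plan is to argue by contradiction: assume $Z=A\cup B$ satisfies $CB(7)$ and derive a numerical impossibility for its Hilbert function. I would begin by recording the data already available. From the table, $Dh_Z(j)=j+1$ for $j=0,1,2,3$, so the first four differences sum to $10$; moreover $a_4=Dh_Z(4)\geq 1$. Since $A$ has $11$ points and $B$ has at most $11$, the crude bound $\ell(Z)=\ell(A\cup B)\leq 22$ holds, and $\ell(Z)=\sum_{j} Dh_Z(j)$. These three facts --- the forced initial segment, the positivity of $a_4$, and the length bound --- are the only inputs I need besides the Cayley--Bacharach hypothesis.

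The engine of the argument is Theorem \ref{GKRext} applied with $i=7$, which under $CB(7)$ gives, for each $0\leq j\leq 8$, the inequality $\sum_{k=0}^{j} Dh_Z(k)\leq \sum_{k=8-j}^{8} Dh_Z(k)$. I would extract two instances. Taking $j=3$ yields $10\leq Dh_Z(5)+Dh_Z(6)+Dh_Z(7)+Dh_Z(8)$, and taking $j=1$ yields $3\leq Dh_Z(7)+Dh_Z(8)$. Feeding the first of these into the length bound gives $22\geq \ell(Z)\geq 10+Dh_Z(4)+10$, hence $Dh_Z(4)\leq 2$. Because $Dh_Z(4)\leq 2\leq 4$, Proposition \ref{nonincr} applies from degree $4$ onward and forces $Dh_Z$ to be non-increasing there; in particular $Dh_Z(5),\dots,Dh_Z(8)\leq Dh_Z(4)$.

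It then remains to rule out the two surviving values $Dh_Z(4)\in\{1,2\}$. If $Dh_Z(4)=1$, then $Dh_Z(7),Dh_Z(8)\leq 1$, so $Dh_Z(7)+Dh_Z(8)\leq 2$, contradicting the $j=1$ inequality. If $Dh_Z(4)=2$, then $Dh_Z(5),\dots,Dh_Z(8)\leq 2$, so their sum is at most $8$, contradicting the $j=3$ inequality. Either way $CB(7)$ is impossible. The step I expect to be the main obstacle is not any single calculation but the correct orchestration: one must first exploit the length bound together with the $j=3$ instance of Theorem \ref{GKRext} to pin $Dh_Z(4)$ down to $\{1,2\}$, and only then is the non-increasing property (Proposition \ref{nonincr}) available at degree $4$, which is what finally converts the two GKR inequalities into a contradiction. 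Getting the order of these deductions right, and choosing precisely the two values $j=1,3$ that produce incompatible tail estimates, is the crux.
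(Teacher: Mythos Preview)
Your proof is correct and follows essentially the same route as the paper: use Theorem~\ref{GKRext} with $j=3$ to get $a_5+a_6+a_7+a_8\geq 10$, combine with $\ell(Z)\leq 22$ to force $a_4\leq 2$, then invoke Proposition~\ref{nonincr} to bound the tail by $8<10$. The only difference is that your case split on $Dh_Z(4)\in\{1,2\}$ and the auxiliary $j=1$ inequality are unnecessary---the $j=3$ bound already kills both cases at once, since $Dh_Z(4)\leq 2$ gives $a_5+\cdots+a_8\leq 4\cdot 2=8$ regardless.
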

\begin{proof} Assume that $Z$ satisfies $  CB(7) $. Then, by Proposition \ref{CBonHilb},
$a_5+a_6+a_7+a_8\geq 1+2+3+4=10$. Since $\ell(Z)\leq 22$,
this implies $a_4\leq 2$. But then, by Proposition \ref{nonincr}, $ a_5+a_6+a_7+a_8\leq 2+2+2+2=8$, a contradiction.
\end{proof}

Next, we need a result which is the application of the argument of Remark  \ref{CBcons} to the existence of two decompositions of
a tensor $T$. Since we will use it in several contests, we prove it in a general setting.

\begin{lemma}\label{CBdis}
Let $U$ be a symmetric tensor and consider two \emph{minimal} decompositions $A_1,A_2$ of $U$. 
Set $W= A_1 \cup A_2$. If $A_1\cap A_2 = \emptyset$,  then $W$ has the Cayley-Bacharach property $CB(d)$.
\end{lemma}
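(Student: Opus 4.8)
The plan is to prove that $W = A_1 \cup A_2$ satisfies $CB(d)$ by showing the contrapositive: if $W$ fails $CB(d)$, then one of the decompositions $A_1, A_2$ fails to be minimal. So suppose $W$ does not satisfy $CB(d)$. By Definition \ref{CB}, there exists a point $P \in W$ and a form $G$ of degree $d$ vanishing on $W \setminus \{P\}$ but not at $P$. Since $A_1$ and $A_2$ are disjoint, the point $P$ lies in exactly one of them; without loss of generality say $P \in A_1$, so that $A_2 \subseteq W \setminus \{P\}$ and hence $G$ vanishes on all of $A_2$.

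The key step is to translate the existence of this form $G$ into a linear dependence that contradicts minimality. First I would invoke Remark \ref{CBcons}: the failure of $CB(d)$ at $P$ yields $h^1_W(d) = h^1_{W \setminus \{P\}}(d)$, which by formula \eqref{h1} means $\dim \langle v_d(W) \rangle = \dim \langle v_d(W \setminus \{P\}) \rangle$, i.e. removing $v_d(P)$ does not drop the dimension of the span. Equivalently, $v_d(P) \in \langle v_d(W \setminus \{P\}) \rangle$. The geometric content I want to extract is sharper, though: I would use Proposition \ref{cap} applied to the disjoint decomposition of $U$. Since $A_1$ and $A_2$ are both decompositions of the \emph{same} tensor $U$, the point $U$ lies in $\langle v_d(A_1) \rangle \cap \langle v_d(A_2) \rangle$, and I want to leverage the vanishing of $G$ on $A_2$ together with $G(P) \neq 0$.

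Concretely, the form $G$ of degree $d$ defines, via the Veronese identification, a hyperplane $H_G$ in $\Pj^N$ containing $v_d(Q)$ precisely when $G(Q) = 0$. Thus $H_G$ contains $v_d(A_2)$ and all of $v_d(A_1 \setminus \{P\})$, but not $v_d(P)$. Consequently $H_G \supseteq \langle v_d(A_2) \rangle$, so $U \in \langle v_d(A_2) \rangle \subseteq H_G$; on the other hand $U \in \langle v_d(A_1) \rangle$, and since $H_G$ contains every $v_d(P_i)$ for $P_i \in A_1 \setminus \{P\}$ but $U \in H_G$, the expression of $U$ as a combination of the points $v_d(A_1)$ cannot actually require the coefficient of $v_d(P)$ to be nonzero — for if it were, then $v_d(P)$ would be forced into $H_G$, contradicting $G(P) \neq 0$. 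Hence $U \in \langle v_d(A_1 \setminus \{P\}) \rangle$, so $A_1 \setminus \{P\}$ is a strictly smaller decomposition of $U$, contradicting the minimality of $A_1$.

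I expect the main obstacle to be the final linear-algebra step: arguing cleanly that $U$ lying in the hyperplane $H_G$ forces the coefficient of $v_d(P)$ in the $A_1$-decomposition of $U$ to vanish. The subtlety is that this requires knowing the other points $v_d(A_1 \setminus \{P\})$ sit inside $H_G$ while $v_d(P)$ does not, and then reading off from $U = \sum_{P_i \in A_1} a_i v_d(P_i)$ with $U \in H_G$ that $a_P \cdot G(P) = 0$, whence $a_P = 0$. This is essentially evaluating the linear functional $G$ on both sides. Once this is set up correctly the minimality contradiction is immediate, and symmetry in $A_1, A_2$ handles the case $P \in A_2$ identically.
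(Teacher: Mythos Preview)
Your argument is correct, but it takes a different route from the paper. The paper's proof applies Remark~\ref{CBcons} to get $h^1_W(d)=h^1_{W\setminus\{P\}}(d)$ and then invokes Proposition~\ref{cap} twice---once for $W=A_1\cup A_2$ and once for $W\setminus\{P\}=(A_1\setminus\{P\})\cup(A_2\setminus\{P\})$---to conclude that $\langle v_d(A_1)\rangle\cap\langle v_d(A_2)\rangle$ and $\langle v_d(A_1\setminus\{P\})\rangle\cap\langle v_d(A_2\setminus\{P\})\rangle$ have the same dimension, hence coincide, so $U$ already lies in the smaller intersection and minimality fails. You instead bypass the $h^1$ formalism entirely: you use $G$ directly as a linear functional on $\Pj^N$, observe that $U\in\langle v_d(A_2)\rangle\subset H_G$, and evaluate $G$ on the $A_1$-expansion of $U$ to force $a_P=0$. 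Your approach is more elementary and self-contained; the paper's version is phrased in the Hilbert-function language used throughout, so its intersection-of-spans conclusion can be (and is) quoted verbatim in later proofs. One minor remark: the material in your second paragraph invoking Remark~\ref{CBcons} and Proposition~\ref{cap} is never actually used in your final argument and could be dropped.
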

\begin{proof}
Suppose that $CB(d)$ does not hold for $W$. Hence there is a point $P \in W$ and a and a hypersurface $F\subset\Pj^n$
 of degree $d$ such that $F$ contains $W\setminus\{P\}$ but misses $P$. Just as in Remark \ref{CBcons},
we get that:
$$h^1_W(d)=h^1_{W\setminus\{P\}}(d).$$
We do not know in principle if $P$ belongs either to $A_1$ or to $A_2$. In any case, by Proposition \ref{cap}:
\begin{multline*} \dim \langle v_d(A_1)\rangle\cap \langle v_d(A_2)\rangle = h^1_W(d)-1 \\=h^1_{W\setminus\{P\}}(d) -1 =
\dim \langle v_d(A_1\setminus\{P\})\rangle\cap \langle v_d(A_2\setminus\{P\})\rangle.\end{multline*}
Thus $U$ belongs to both $v_d(A_1\setminus\{P\})$ and $v_d(A_2\setminus\{P\})$. If $P\in A_1$,
we get a contradiction with the minimality of $A_1$. Similarly, if $P\in A_2$, we get a 
contradiction with the minimality of $A_2$.
\end{proof}

As a consequence of Lemma \ref{CBdis} and Proposition \ref{CB7}, applied to the decompositions
$A,B$ of $T$, in our setting we find:

\begin{proposition} \label{capcap} The intersection  $A\cap B$ is non empty.
\end{proposition}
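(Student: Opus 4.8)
The plan is to deduce the statement by contradiction, synthesizing the two results that immediately precede it, so that no new computation is needed. First I would suppose, to the contrary, that $A\cap B=\emptyset$. Since $A$ and $B$ are by hypothesis both \emph{minimal} decompositions of the same septic $T$, and $T$ has degree $d=7$, I would then invoke Lemma \ref{CBdis} with $A_1=A$, $A_2=B$, and $U=T$. The disjointness requirement $A_1\cap A_2=\emptyset$ of that lemma is exactly the assumption just made, so the lemma yields that the union $Z=A\cup B$ satisfies the Cayley--Bacharach property $CB(7)$.

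This conclusion is incompatible with Proposition \ref{CB7}, which asserts precisely that $Z=A\cup B$ cannot satisfy $CB(7)$. The obstruction there is purely numerical: the inclusion $A\subset Z$, together with the maximality of the Kruskal's ranks of $A$, forces $Dh_Z(j)=j+1$ for $j\le 3$, while $a_8>0$ (coming from $T\in\langle v_7(A)\rangle\cap\langle v_7(B)\rangle$ with $A,B$ minimal), and these values cannot coexist with the constraints that $CB(7)$ would impose through Proposition \ref{CBonHilb}. The resulting contradiction shows that the assumption $A\cap B=\emptyset$ is untenable, whence $A\cap B\neq\emptyset$, as claimed.

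I expect no genuine obstacle in this step, since the substance of the argument already resides in Lemma \ref{CBdis} and Proposition \ref{CB7}; the proposition is their immediate consequence. The only things to verify are that the hypotheses of both results are met in the present situation, namely that the degree of $T$ matches the $d$ in $CB(d)$, that minimality of \emph{both} $A$ and $B$ is available (as used in the proof of Lemma \ref{CBdis} to rule out the disjoint case via the minimality contradiction), and that the genericity assumptions on $A$ enter only through Proposition \ref{CB7}. Once these are confirmed, the contradiction is automatic.
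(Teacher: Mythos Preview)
Your proposal is correct and matches the paper's own argument: the proposition is stated precisely as an immediate consequence of Lemma \ref{CBdis} (if $A\cap B=\emptyset$ then $Z$ satisfies $CB(7)$) together with Proposition \ref{CB7} ($Z$ cannot satisfy $CB(7)$). The additional commentary you give about the numerical obstruction behind Proposition \ref{CB7} is extra (and in fact the key inequality there comes from Theorem \ref{GKRext} rather than Proposition \ref{CBonHilb}), but the proof of Proposition \ref{capcap} itself is exactly as you describe.
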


So we must have $\ell(A\cap B)=i>0$. After rearranging the points of $A,B$, we may assume
$$ A=\{P_1,\dots,P_i,P_{i+1},\dots,P_{11} \}\qquad B=\{P_1,\dots,P_i,P'_{i+1},\dots,P'_q\},$$
where $q=\ell(B)\leq 11$, $i<q$, and the set $B_0=\{P'_{i+1},\dots,P'_q\}$ is disjoint from $A$, i.e. $B_0=B\setminus A$. 
Then we know that $Z=A\cup B_0$ has not the property $CB(7)$.

For a choice of the scalars, we can write:
\begin{multline*}
 a_1v_7(P_1)+\dots + a_{11}v_7(P_{11}) = T  \\
= b_1v_7(P_1)+\dots +b_iv_7(P_i)+b_{i+1}v_7(P'_{i+1})+\dots +b_qv_7(P'_q).
\end{multline*}
The minimality of $A,B$ implies that none of the coefficients $a_i,b_i$ is $0$.

Write:
\begin{gather*} 
T_0= (a_1-b_1)v_7(P_1)+\dots + (a_i-b_i)v_7(P_i)+a_{i+1}v_7(P_{i+1})+\dots + a_{11}v_7(P_{11})\\
 = b_{i+1}v_7(P'_{i+1})+\dots +b_qv_7(P'_q)
 \end{gather*}
  so that $T=T_0+b_1v_7(P_1)+\dots +b_iv_7(P_i)$. 
 
 We are now ready to prove that:
 
 \begin{proposition} The existence of $B$ yields a contradiction.
 \end{proposition}
\begin{proof} Since $Z=A\cup B_0$ does not satisfy $CB(7)$, as in the proof of  Lemma \ref{CBdis},
we know that there exists a point $P\in Z$ such that 
$$\langle v_7(A\setminus\{P\})\rangle\cap \langle v_7(B_0\setminus\{P\})\rangle =
\langle v_7(A)\rangle\cap \langle v_7(B_0)\rangle,$$
hence $T_0$ belongs to $\langle v_7(A\setminus\{P\})\rangle\cap \langle v_7(B_0\setminus\{P\})\rangle$. If
$P$ belongs to $B_0$, then $T$ is spanned by $v_7(B\setminus \{P\})$, which contradicts the minimality of $B$.
Similarly, if $P=P_j$ with $j>i$, then $T$ is spanned by $v_7(A\setminus \{P\})$, which contradicts the minimality of $A$.
Thus $P$ is a point in $A\cap B$, and, after rearranging, we may assume $P=P_1$.

Thus $T_0=c_2v_7(P_2)+\dots + c_{11}v_7(P_{11})$. If $a_1-b_1\neq 0$, this implies that $v_7(A)$ is linearly
dependent, which contradicts the minimality of $A$. Thus $a_1=b_1$, so that $T_0$ is spanned by 
$v_7(P_2),\dots, v_7(P_{11})$. It may happen that some other coefficient $(a_i-b_i)$ is $0$.
Nevertheless $T_0$ has a minimal decomposition $A'\subset A$ whose length $\ell(A')$ satisfies by $11-i\leq\ell(A')\leq 10$, namely:
$$T_0= (a_2-b_2)v_7(P_2)+\dots + (a_i-b_i)v_7(P_i)+a_{i+1}v_7(P_{i+1})+\dots + a_{11}v_7(P_{11}).$$
Moreover $T_0$ has a decomposition, $B_0$, of length $\ell(B_0)=q-i\leq 11-i\leq \ell(A')$. Since $A'$ consists of at most $10$ points of $A$, then
$v_3(A')$ is linearly independent. Thus, the Kruskal's rank $k'_3$ of $v_3(A')$ is $\ell(A')$, while the Kruskal's rank $k'_1$ of $A'$ is 
$\min\{\ell(A'),3\}$. Moreover $\ell(A')>1$,  otherwise $i=10$, $q=11$ and $T_0=a_{11}v_7(P_{11})=b_{11}v_7(P'_{11})$, 
which means that, as projective points,  $P_{11}=P'_{11}$, a contradiction. Thus one computes:
$$ \ell(A')\leq \frac {k'_3+k'_3+k'_1-2}2.$$
It follows that the Kruskal's criterion contradicts the existence of the two different decompositions of $T_0$. 
\end{proof}

\begin{example} To give an example, consider the linear forms $L_1,\dots,L_{11}$ in $3$ variables associated to the points
\begin{gather*}
(1,0,0),\ (0,1,0),\ (0,0,1),\, (1,1,1),\ (1,-1,2),\ (1,3,-1),\\
(1,2,3),\ (2,-1,1),\ (-2,-1,3),\ (-1,3,4), \ (3,-1,4).
\end{gather*}
One computes easily that the set $A$ of the points has maximal Kruskal's ranks $k_1=3$ and $k_3=10$.
It turns out  that any linear combination of the 7th powers of the forms $L_i$'s, with no
zero coefficients, has rank $11$ and it is identifiable. 
For instance, this holds for the sum $T=L_1^7+\dots +L_{11}^7$, i.e.
{\small 
\begin{gather*} T=2191x^7-849x^6y+249x^5y^2-51x^4y^3+45x^3y^4+501x^2y^5+69xy^6+4500y^7+\\3181x^6z -918x^5yz
+430x^4y^2z-204x^3y^3z+346x^2y^4z-1128xy^5z+2390y^6z+3631x^5z^2\\-1390x^4yz^2+274x^3y^2z^2 +344x^2y3z^2
-1034xy^4z^2+4390y^5z^2+5731x^4z^3-1668x^3yz^3\\+1372x^2y^2z^3 -1686xy^3z^3 +5636y^4z^3+6115x^3z^4
-1714x^2yz^4-1346xy^2z^4\\ +7234y^3z^4+11491x^2z^5-5208xyz^5 +11480y^2z^5+7531xz^6+8860yz^6+37272z^7.
\end{gather*}}
\end{example}

\section{Decompositions on cubics}\label{cub}
In this section we will prove results on the identifiability of a particular set of  tensors.
From now on, consider symmetric tensors $T$ of degree $7+2q$ in three variables, which satisfy the following assumptions:

\begin{itemize}
\item Fix the integer $q\geq 0$ and let $T$ be a symmetric tensor of degree $d=7+2q$ in $3$ variables, 
with a decomposition $A=\{P_1,\dots,P_r\}\subset\Pj^2$,
of length $r=\ell(A)\leq 10+3q$ such that $A$ is contained in a plane cubic curve $C$. 
\item Assume that the Kruskal's rank of $A$ is $k_1=\min\{3,r\}$ and the 
 $(q+3)$-th Kruskal's rank $k_{q+3}$ of $A$ is equal to $\min\{r,3q+9\}$.
\end{itemize}   

The second assumption implies that no $3$ points of $A$ are aligned.

\begin{remark} Assume that $r=\ell(A)$ is smaller than $3q+10$. Then the second assumption implies that the
$(q+3)$-th Kruskal's rank of $A$ is $r$. 

Take the partition $d=2q+7=(q+3)+(q+3)+1$. Then $r$ satisfies:
$$r \leq \frac {k_{q+3}+k_{q+3}+k_1-2}2,$$
thus by Kruskal's Theorem \ref{K}, we get that $T$ has rank $r$ and $A$ is the unique decomposition
of $T$ (up to trivialities).
\end{remark}

Thus, from now on we will assume that $A$ has cardinality $3q+10$. Notice that, when $r=3q+10$, the Kruskal's 
criterion gives no information (e.g. for the partition $d=2q+7=(q+3)+(q+3)+1$, we have $3q+10> ((3q+9)+(3q+9)+3-2)/2$).
Hence we need to apply another strategy.

\begin{remark}
Assume $r=3q+10$. Since $A$ is contained in a cubic curve, the difference of the Hilbert function of $A$ satisfies
$Dh_A(0)=1$, $Dh_A(1)\leq 2$, $Dh_A(2)\leq 3$ and also $Dh_A(3)\leq 3$.
Thus  $Dh_A(i)\leq 3$ for $i>3$, by Proposition \ref{nonincr}. 

Assume that $Dh_A(1)\leq 1$. Then $h_A(1)<3$, which contradicts the assumption $k_1=3$.
Thus $Dh_A(1)=2$.

Assume that $Dh_A(i)<3$ for some $i$ in the range $2\leq i\leq q+3$. Then:
$$h_A(q+3)=\sum_{j=0}^{q+3}Dh_A(j)\leq 1+2+3(q+1)+2<3q+9,$$
which contradicts the assumption $k_{q+3}=3q+9$. Thus $Dh_A(i)=3$ for $i=2,\dots, q+3$.

It follows that $\sum_{j=0}^{q+3}Dh_A(j)= 3q+9$. Moreover, $Dh_A(q+4)$ can not be $0$,
otherwise $Dh_A(j)=0$ for $j\geq q+4$, by Proposition \ref{nonincr}, hence $\sum_{j=0}^{\infty}Dh_A(j)<3q+10$,
a contradiction. It follows that $Dh_A(q+4)=1$ and  $Dh_A(j)=0$ for $j>q+4$. 

In particular, we get $h_A(2)=6$ and $h_A(3)=9$, which means that $A$ is contained in no conics and in exactly 
one cubic curve.

The function $Dh_A$ is thus the one displayed below.
\begin{center}\begin{tabular}{c|cccccccccc}
$i$           & $0$ & $1$ & $2$ &  $3$ &   \dots &    $q+2$ &  $q+3$ &$q+4$ &$q+5$ &\dots  \\  \hline
$Dh_A(i)$ & $1$ & $2$ & $3$ &  $3$ &   \dots &    $3$     &  $3$     & $1$    &  $0$    &\dots
\end{tabular}\end{center}
\end{remark}
\smallskip

We want to exclude the existence of a second decomposition $B$ of $T$ of length $\ell(B)\leq 3q+10$. So assume,
by contradiction, that $B$ exists, and assume, as above, that $B$ is minimal. Define as above $Z=A\cup B$, so that $\ell(Z)\leq 6q+20$.

Using Lemma \ref{CBdis}, we can prove that the intersection $A\cap B$ can not be empty.

\begin{proposition}\label{CubDis}
The Cayley-Bacharach property $CB(d)$ can not hold for $Z=A\cup B$. Thus $A\cap B\neq \emptyset$.
\end{proposition}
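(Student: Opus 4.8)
The plan is to handle the Cayley--Bacharach assertion first and deduce non-emptiness from it: I will show that $Z=A\cup B$ cannot satisfy $CB(d)$ for $d=2q+7$, and then invoke Lemma \ref{CBdis} applied to the two minimal decompositions $A,B$ of $T$ — if $A\cap B=\emptyset$ that lemma would force $CB(d)$ on $Z$, a contradiction, so $A\cap B\neq\emptyset$. Hence all the work is in the first assertion. The data I have available are: the bound $\ell(Z)\leq 6q+20$, and the inclusion $A\subseteq Z$, which via Proposition \ref{inclu} gives $Dh_Z(j)\geq Dh_A(j)$ for every $j$, where $Dh_A$ is the table of the preceding remark.

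First I would pin down the low-degree values of $Dh_Z$. Working in $\Pj^2$ forces $h_Z(1)\leq 3$, $h_Z(2)\leq 6$, $h_Z(3)\leq 10$, i.e. upper bounds on $Dh_Z(1),Dh_Z(2),Dh_Z(3)$; matching these against the lower bounds $Dh_Z(1)\geq 2$, $Dh_Z(2)\geq 3$ coming from $A$, I obtain that $Dh_Z(0)=1$, $Dh_Z(1)=2$, $Dh_Z(2)=3$ are forced, while $Dh_Z(3)\in\{3,4\}$. From $A\subseteq Z$ I also record $Dh_Z(j)\geq 3$ for all $2\leq j\leq q+3$ and $Dh_Z(q+4)\geq 1$.

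The engine is Theorem \ref{GKRext} with $i=d=2q+7$ and $j=q+3$ (so $i+1-j=q+5$), under the assumption that $Z$ satisfies $CB(d)$; it yields
$$\sum_{k=0}^{q+3} Dh_Z(k)\ \leq\ \sum_{k=q+5}^{2q+8} Dh_Z(k),$$
an inequality between a ``low'' block and a ``high'' block of $q+4$ terms each. I then split on $Dh_Z(3)$. If $Dh_Z(3)=4$, the low block sums to at least $1+2+3+4+3q=3q+10$ (using $Dh_Z(k)\geq 3$ for $4\leq k\leq q+3$), hence so does the high block; adding the intermediate term $Dh_Z(q+4)\geq 1$ gives $\ell(Z)\geq(3q+10)+1+(3q+10)=6q+21$, contradicting $\ell(Z)\leq 6q+20$.

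The delicate case is $Dh_Z(3)=3$, and I expect this to be the crux, because the GKRext inequality by itself only yields $\ell(Z)\geq 6q+19$. Here $Dh_Z(3)=3\leq 3$ triggers Proposition \ref{nonincr}, so $Dh_Z$ is non-increasing from degree $3$ onward; combined with the lower bounds this forces $Dh_Z(k)=3$ for $3\leq k\leq q+3$, so the low block equals exactly $3q+9$ (and, incidentally, $h_Z(3)=9$ shows $Z$ lies on the same unique cubic as $A$). The high block then satisfies $\sum_{k=q+5}^{2q+8}Dh_Z(k)\geq 3q+9$, but by monotonicity each of its $q+4$ terms is at most $Dh_Z(q+4)$, whence $(q+4)\,Dh_Z(q+4)\geq 3q+9>2(q+4)$, forcing $Dh_Z(q+4)\geq 3$ and therefore $=3$ (it is also $\leq Dh_Z(q+3)=3$). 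Now the low block, the term $Dh_Z(q+4)=3$, and the high block are supported on disjoint degrees and already account for $(3q+9)+3+(3q+9)=6q+21>6q+20$ points, the final contradiction. Thus $Z$ cannot satisfy $CB(d)$, and the non-emptiness of $A\cap B$ follows as above.
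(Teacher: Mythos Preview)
Your proof is correct and uses the same engine as the paper: Theorem~\ref{GKRext} with $i=d$ and $j=q+3$, the bound $\ell(Z)\leq 6q+20$, and Proposition~\ref{nonincr}, followed by Lemma~\ref{CBdis} for the second assertion. The paper's version is shorter because it avoids your case split on $Dh_Z(3)$: from $\sum_{j=0}^{q+3}Dh_Z(j)\geq 3q+9$ and the GKRext inequality it reads off $Dh_Z(q+4)\leq 2$ directly out of $\ell(Z)\leq 6q+20$, and then Proposition~\ref{nonincr} (applied at $q+4$, not at $3$) bounds the high block by $2(q+4)<3q+9$, giving the contradiction in one stroke.
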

\begin{proof}
Assume that $CB(d)$ holds for $Z=A\cup B$.
Thus, by Theorem \ref{GKRext} we have:
$$\sum_{j=0}^{q+3}Dh_Z(j) \leq \sum_{j=q+5}^{d+1}Dh_Z(j).$$ 
Since $\sum_{j=0}^{q+3}Dh_Z(j)\geq \sum_{j=0}^{q+3}Dh_A(j)=3q+9$, we get:
\begin{multline*} 6q+20\geq \ell(Z)\geq \sum_{j=0}^{2q+8}Dh_Z(j) =\sum_{j=0}^{q+3}Dh_Z(j)+Dh_Z(q+4)+\sum_{j=q+5}^{2q+8}Dh_Z(j)
\\ \geq 3q+9 + Dh_Z(q+4) + 3q+9,\end{multline*}
thus $Dh_Z(q+4)\leq 2$. But then, by Proposition \ref{nonincr}, $Dh_Z(j)\leq 2$ for $j\geq q+4$, thus:
$$3q+9 \leq \sum_{j=q+5}^{2q+8}Dh_Z(j)\leq 2(q+4),$$
a contradiction.

The second claim follows by Lemma \ref{CBdis} applied to $A,B$ and $T$.
\end{proof}

Now we can prove the main results of this section.

 \begin{proposition} The existence of $B$ yields a contradiction.
 \end{proposition}
\begin{proof}
Suppose there is another decomposition $B$ of $T$, of cardinality $\ell(B)=k\leq 10+3q$. 
We know from Proposition \ref{CubDis} that $A\cap B\neq \emptyset$.
Thus we can write, without loss of generality,  $B=\{ P_1,\dots, P_i,P'_{i+1},\dots,P'_r\}$, i.e.
we may assume that $A\cap B=\{ P_1,\dots, P_i\}$, $i>0$.
Then there are coefficients $a_1,\dots,a_{3q+10},b_1,\dots,b_k$ such that:
\begin{multline*}T=a_1v_d( P_1)+\dots+a_iv_d(P_i)+a_{i+1}v_d(P_{i+1})\dots+a_{3q+10} v_d(P_{3q+10}) = \\
b_1 v_d(P_1)+\dots+b_i v_d(P_i)+ b_{i+1}v_d(P'_{i+1})+\dots+b_k v_d(P'_k).\end{multline*}
Consider the tensor 
$$T_0 = (a_1-b_1)v_d( P_1)+\dots+(a_i-b_i)v_d( P_i)+a_{i+1}v_d(P_{i+1})+\dots+a_{3q+10} v_d(P_{3q+10}),$$
which is also equal to $ b_{i+1}v_d(P'_{i+1})+\dots+b_k v_d(P'_k)$. Thus $T_0$ has the two decompositions $A$ and
$B'=\{P'_{i+1},\dots,P'_k\}$, which are disjoint. Thus, if $A$ and $B'$ are both minimal, as decompositions
of $T_0$, then by  Lemma \ref{CBdis} applied to $A,B'$ and $T_0$, we get that $A\cup B'$ satisfies $CB(d)$.
Since $A\cup B'=A\cup B=Z$, and we know by Proposition \ref{CubDis} that $Z$ does not satisfies $CB(d)$,
we find that either $A$ or $B'$ are not minimal.  

Assume that $B'$ is not minimal. Then we can find a point of $B'$, say $P'_k$, such that
$T_0$ belongs to the span of $v_d(B'\setminus\{P'_k\})$. Since $T=T_0+b_1v_d(P_1)+\dots+b_iv_d(P_i)$,
this would mean that $T$ belongs to the span of $v_d(B\setminus\{P'_k\})$, which contradicts the minimality of $B$.

Assume that $A$ is not minimal, and $T_0$ belongs to the span of $v_d(A\setminus\{P_j\})$,
for some $j>i$. That is,  as above, since $T=T_0+b_1v_d(P_1)+\dots+b_iv_d(P_i)$,
this would mean that $T$ belongs to the span of $v_d(A\setminus\{P_j\})$, which contradicts the minimality of $A$.

Assume that $A$ is not minimal, and $T_0$ belongs to the span of $v_d(A\setminus\{P_j\})$,
for some $j\leq i$, say $j=1$. Then $T_0=\gamma_2v_d(P_2)+\dots+\gamma_{3q+10}v_d(P_{3q+10})$,
for some choice of the coefficients $\gamma_j$. Since $v_d(A)$ is linearly independent, because $A$ is minimal for $T$,
this is only possible if $a_1-b_1=0$. So there exists a proper subset $A'\subset A$ which provides a minimal decomposition of
$T_0$, together with $B'$. Moreover $\ell(A')\geq \ell(B')$. Since $A$ has $(q+3)$-th Kruskal's rank $3q+9\geq \ell(A')$, by Remark \ref{ksub} the 
$(q+3)$-th Kruskal's rank $k'_{q+3}$ of $A'$ is $\ell(A')$. Similarly, the Kruskal's rank $k'_1$ of $A'$ is $\min\{3,\ell(A')\}$.
Moreover $\ell(A')>1$,  otherwise $i=3q+9$, $k=3q+10$ and $T_0=a_{3q+10}v_d(P_{3q+10})=b_{3q+10}v_d(P'_{3q+10})$, 
which means that, as projective points,  $P_{3q+10}=P'_{3q+10}$, a contradiction. Thus one computes:
$$ \ell(A')\leq \frac {k'_{q+3}+k'_{q+3}+k'_1-2}2.$$
hence the existence of a second decomposition $B'$ of $T_0$, with $\ell(A')\geq \ell(B')$, contradicts Kruskal's Theorem \ref{K}.
\end{proof}

\begin{remark} We can apply verbatim the previous procedure even for the case $q=-1$. We get ternary forms of 
degree $7+2q=5$ and rank $r\leq 10+3q=7$. In particular, notice that $7$ is the rank of a generic quintic form,
by \cite{AlexHir95}. Notice also that, for $q=-1$, the assumption that $A$ is contained in a cubic is unnecessary,
for any set of cardinality $\leq 7$ in $\Pj^2$ lies in a cubic.
Thus, we get back, from our procedure, the classically well known fact that a general
ternary form $T$ of degree $5$ has a unique decomposition with $7$ powers of linear forms.

Indeed, we can give a more precise notion of {\it generality} for $T$: the uniqueness holds if a decomposition $A$ of $T$
has Kruskal's ranks $k_1=3$ and $k_2=6$.
\end{remark}

\bibliographystyle{amsplain}
\bibliography{biblioLuca}

\providecommand{\bysame}{\leavevmode\hbox to3em{\hrulefill}\thinspace}
\providecommand{\MR}{\relax\ifhmode\unskip\space\fi MR }
\providecommand{\MRhref}[2]{%
  \href{http://www.ams.org/mathscinet-getitem?mr=#1}{#2}
}
\providecommand{\href}[2]{#2}
\begin{thebibliography}{10}

\bibitem{AlexHir95}
J.~Alexander and A.~Hirschowitz, \emph{Polynomial interpolation in several
  variables}, J. Algebraic Geom. \textbf{4} (1995), 201--222.

\bibitem{AllmanMatiasRhodes09}
E.S. Allman, C.~Matias, and J.A. Rhodes, \emph{Identifiability of parameters in
  latent structure models with many observed variables}, Ann. Statistics
  \textbf{37} (2009), 3099--3132.

\bibitem{AnandkumarGeHsuKakadeTelgarsky14}
A.~Anandkumar, R.~Ge, D.~Hsu, S.M. Kakade, and M.~Telgarsky, \emph{Tensor
  decompositions for learning latent variable models}, J. Machine Learn. Res.
  \textbf{15} (2014), 2773--2832.

\bibitem{Ange}
E.~Angelini, \emph{Waring decompositions and identifiability via {B}ertini and
  {M}acaulay2 software}, J. Symbolic Comput. \textbf{91} (2019), 200--212.

\bibitem{AngeCVan18}
E.~Angelini, L.~Chiantini, and N.~Vannieuwenhoven, \emph{Identifiability beyond
  {K}ruskal's bound for symmetric tensors of degree 4}, Rend. Lincei Mat.
  Applic. \textbf{29} (2018), 465--485.

\bibitem{AppellofDavidson81}
C.J. Appellof and E.R. Davidson, \emph{Strategies for analyzing data from video
  fluorometric monitoring of liquid chromatographic effluents}, Anal. Chem.
  \textbf{53} (1981), 2053--2056.

\bibitem{BallC12}
E.~Ballico and L.~Chiantini, \emph{A criterion for detecting the
  identifiability of symmetric tensors of size three}, Diff. Geom. Applic.
  \textbf{30} (2012), 233--237.

\bibitem{BallC13}
\bysame, \emph{Sets computing the symmetric tensor rank}, Mediterranean J.
  Math. \textbf{10} (2013), 643–--654.

\bibitem{C}
L.~Chiantini, \emph{Hilbert functions and tensor analysis}, Available online
  arXiv:1807.00642, 2018.

\bibitem{COttVan17b}
L.~Chiantini, G.~Ottaviani, and N.~Vannieuwenhoven, \emph{Effective criteria
  for specific identifiability of tensors and forms}, SIAM J. Matrix Anal.
  Appl. \textbf{38} (2017), 656--681.

\bibitem{COttVan17a}
\bysame, \emph{On generic identifiability of symmetric tensors of subgeneric
  rank}, Trans. Amer. Math. Soc. \textbf{369} (2017), 4021--4042.

\bibitem{Derksen13}
H.~Dersken, \emph{Kruskal's uniqueness inequality is sharp}, Linear Alg.
  Applic. \textbf{438} (2013), 708--712.

\bibitem{GerKreuzerRobbiano93}
A.V. Geramita, M.~Kreuzer, and L.~Robbiano, \emph{Cayley-{B}acharach schemes
  and their canonical modules}, Trans. Amer. Math. Soc. \textbf{339} (1993),
  443--452.

\bibitem{HauensteinOedOttSomm}
J.~Hauenstein, L.~Oeding, G.~Ottaviani, and A.J. Sommese, \emph{Homotopy
  techniques for tensor decomposition and perfect identifiability}, J. Reine
  Angew. Math. (Crelles Journal) (2016), Available online
  https://doi.org/10.1515/crelle-2016-0067.

\bibitem{Kruskal77}
J.B. Kruskal, \emph{Three-way arrays: rank and uniqueness of trilinear
  decompositions, with application to arithmetic complexity and statistics},
  Linear Algebra Appl. \textbf{18} (1977), 95--138.

\bibitem{MassaMellaStagliano18}
A.~Massarenti, M.~Mella, and G.~Staglian\'o, \emph{Effective identifiability
  criteria for tensors and polynomials}, J. Symbolic Comput. \textbf{87}
  (2018), 227--237.

\end{thebibliography}
\end{document}